\documentclass{amsart}
\usepackage{amsmath}
\usepackage{graphicx}
\usepackage{amsfonts}
\usepackage{amssymb}

\setcounter{MaxMatrixCols}{10}

\newtheorem{theorem}{Theorem}
\theoremstyle{plain}

\newtheorem{corollary}{Corollary}

\newtheorem{definition}{Definition}
\newtheorem{example}{Example}

\newtheorem{proposition}{Proposition}
\newtheorem{remark}{Remark}

\numberwithin{equation}{section}

\begin{document}
\title[On the  $L_{\infty}$-bialgebra structure]{On the  $L_{\infty}$-bialgebra structure of the rational homotopy groups $\pi_{*}(\Omega \Sigma Y)\otimes \mathbb{Q}$ }
\author{Samson Saneblidze}

\address{A. Razmadze Mathematical Institute,
	     I. Javakhishvili Tbilisi State University 2,
	     Merab Aleksidze II Lane, 0193 Tbilisi,
	     Georgia}

\email{sane@rmi.ge}

\subjclass[2000]{Primary 55P35; Secondary  55S05}
 \keywords{Loop space, homotopy groups, $L_{\infty}$-bialgebra}
\date{}

\begin{abstract}
We introduce the notion of an $L_{\infty}$-bialgebra structure on a vector space. We show that
 the rational homotopy groups $\pi_{*}(\Omega \Sigma Y)\otimes \mathbb{Q}$ admit such a structure for the loop space $\Omega \Sigma Y$ of a suspension $\Sigma Y$  that
 characterizes $Y$ up to rational homotopy equivalence.

\end{abstract}
\maketitle
\begin{center}
\textit{Dedicated to the memory of Nodar Berikashvili}
\end{center}

\section{Introduction}

The homotopy groups $\pi_\ast(\Omega X)$ of  the loops $\Omega X$ on a topological space $X$ have no non-zero coproduct. Nevertheless it may have non-trivial higher order cooperations that form an $L_\infty$-coalgebra structure on
$\pi_\ast(\Omega X).$ The  Samelson product
is compatible with this  structure in a sense  that leads to the notion of an  $L_\infty$-bialgebra.
Let $H_\ast(X)$ denote
the homology with rational coefficients $\mathbb{Q}.$  It admits
an $A_\infty$-coalgebra structure, more precisely, a $C_\infty$-coalgebra structure
dual to the $A_\infty$-algebra and $C_\infty$-algebra structures on the cohomology
$H^\ast(X)$ (cf.  \cite{kade}).
In \cite{SU} the notion of an $A_\infty$-bialgbera is introduced on a vector space $V$ and proved that the loop homology
$H_\ast(\Omega X)$ admits such a structure for a simply connected  space $X.$
The motivation of the paper is Theorem 12.2  in \cite{SU}  asserting  that
the Bott-Samelson bialgebra isomorphism $T^a \tilde{H}_\ast(Y)\approx H_{*}(\Omega \Sigma Y)$ extends to an isomorphism of $A_\infty$-bialgebras where on the left-hand side
the $A_\infty$-bialgebra structure consists of the tensor multiplication and of $A_\infty$-coalgebra structural cooperations extended from $H_\ast(Y).$
There is the (anti)symmetrization functor from the category of $A_\infty$-algebras to the category of $L_{\infty}$-algebras (cf. \cite{lada-sta}, \cite{lada-markl}, \cite{fernandez}), and dually from  the category of $A_\infty$-coalgebras to the category of $L_{\infty}$-coalgebras.
Here we have to modify the above extension rule for the $A_\infty$-coalgebra structure of $H_\ast(Y)$ so that the obtained $A_{\infty}$-coalgebra structural cooperations
of $T^a \tilde{H}_\ast(Y)$ preserve the primitives $PT^a \tilde{H}_\ast(Y)\subset T^a \tilde{H}_\ast(Y) ,$
i.e., the rational homotopy groups $\pi_\ast(\Omega \Sigma Y)\otimes \mathbb{Q}.$ Then the  $L_\infty$-bialgebra structure on $\pi_\ast(\Omega \Sigma Y)\otimes \mathbb{Q}$ is obtained by the symmetrization of the $A_\infty$-coalgebra structure.

Furthermore, the  $L_{\infty} $-bialgebra structure on
 $\pi_\ast(\Omega \Sigma Y)\otimes \mathbb{Q}$ characterizes $Y$ up to rational homotopy equivalence.

The rational homotopy groups admit   $L_{\infty}$-algebra structures, but
for  $\pi_\ast(\Omega \Sigma Y)\otimes \mathbb{Q}$   these structures are degenerated and consists only of the Samelson binary product, since
$\pi_\ast(\Omega \Sigma Y)\otimes \mathbb{Q}$
is a free Lie algebra for an arbitrary  $Y.$
In general,
it may have a sense to establish compatibility relation between an $L_{\infty}$-algebra and an $L_{\infty}$-coalgebra structures on a vector space $V$ by means of the
symmetrization of the aforementioned $A_\infty$-bialgebra structure on $V.$

\section{$A_\infty$-coalgebras }
A differential graded coalgebra (dgc) $(C_\ast,d, \Delta: C_\ast\rightarrow C_\ast\otimes C_\ast )$ is non-negatively graded. It is \emph{connected} if $C_0=\mathbb{Q}.$ A dgc  may be  coassociative  or not.
The \emph{reduced} coalgebra $\tilde{C_\ast}$  is defined by $\tilde{C_\ast}=C_{>0},$  and
 $PC\subset C$ denotes the vector subspace of the primitives, $PC=\{c\in C\mid \Delta(c)=1\otimes c+c\otimes 1\}$  for $1\in C_0.$ We assume dgc's are connected and of finite types unless otherwise is stated explicitly.
The cellular chains  $(C_\ast(K_n),d,\Delta_K)$ of the associahedron $K_n,\,n\geq 2,$
is a non-connected, non-coassociative dgc with the coproduct
 \begin{equation}\label{delta}
   \Delta_K: C_*(K_n)\rightarrow C_*(K_n)\otimes C_*(K_n)
    \end{equation}
defined as follows (cf. \cite{SU2}). Recall the partial (Tamari) ordering
on the vertices  of $K_n$ defined by  $u\leq v$  if there
exists an oriented edge-path  from $u$ to $v$ in $K_n.$
 Denote the minimal and maximal vertices of a cell $a$ of $K_n$
by $\min a$ and $\max a$ respectively, and extend the partial ordering
on the cells  of $K_n$ by $a\leq b$ if $\max a\leq \min b.$
 Then
\begin{equation}\label{diagonal}
\Delta_{K}\left(  e\right)  =\sum_{\substack{|a|+|b|=|e|\\ a\leq b}}
sgn(a,b)\,a\otimes b,\ \ \ a\times b\subset e\times e,
\end{equation}
where
$e\subset K_n$ is a cell of $K_n$ in which the top cell is denoted by $e^{n-2}$ for $n\geq 2.$

Given a dg vector space $(C,d),$ an $A_{\infty}$-coalgebra structure  \[(C,d,\{\psi_n: C\rightarrow C^{\otimes n}\}_{n\geq 2})\ \ \text{on}\ \ C\]
 is defined by a chain (operadic) map
\[ \psi : C_\ast(K_n)\rightarrow Hom(C,C^{\otimes n})\ \   \text{with}\ \  \psi(e^{n-2})=\psi_n,\ \ \text{a map of degree}\ \  n-2. \]
In particular, $(C,d, \Delta:=\psi_2)$ is a dgc. Denoting  $\psi_1:=d,$  for each $n\geq 1 $ the cooperations $\psi_n$
satisfy the following  quadratic relations in $Hom(C,C^{\otimes n})$
\begin{equation}\label{Acoalgebra}
\sum_{\substack{ 0\leq k\leq n-1  \\ 0\leq i\leq n-k-1}}
(-1)^{k\left( n+i+1\right) }    \left(id^{\,\otimes i} \otimes \psi_{k+1}\otimes id^{\,\otimes n-k-1-i}\right)\circ  \psi_{n-k}=0.
\end{equation}
\textbf{A $C_\infty$-coalgebra}  $(C,d, \{\psi_r\}_{r\geq 2})$ consists of the data similarly to that of   an $A_\infty$-coalgebra
 but  specified by the condition that
 each dual   operation  $\psi_r^\ast:(C^\ast)^{\otimes r} \rightarrow C^\ast$
 vanishes  on the decomposables under the  shuffle product on $TC^\ast=\underset{k\geq 1}{\bigoplus} (C^\ast)^{\otimes k}.$

Given two  $A_\infty$-coalgebras $(A, d, \{\psi^A_r\}_{r\geq 2})$ and
$(B, d, \{\psi^B_r\}_{r\geq 2}),$
the definition of their  tensor product
$(A\otimes B,d_\otimes,\{\Psi_r\}_{r\geq 2})$ relies on the coproduct
(\ref{delta}) as follows.
Let  the map
 \[\chi: Hom(A,A^{\otimes r})\otimes Hom(B,B^{\otimes r})\rightarrow
Hom(A\otimes B, (A\otimes B)^{\otimes r})\]
be defined by the composition
$\chi:=\sigma^\ast_{r,2}\circ\iota,$ where
\[\iota: Hom(A,A^{\otimes r})\otimes Hom(B,B^{\otimes r})\rightarrow
Hom(A\otimes B,A^{\otimes r}\otimes B^{\otimes r})\]
is the standard map and
 \[ \sigma^\ast_{r,2} :Hom(A\otimes B, A^{\otimes r}\otimes B^{\otimes r})\rightarrow Hom(A\otimes B,  (A\otimes B)^{\otimes r})  \]
is induced by the standard permutation
$\sigma_{r,2}:A^{\otimes r}\otimes B^{\otimes r}\rightarrow
(A\otimes B)^{\otimes r}.$
For each $r\geq 2,$  the tensor cooperation
\[\Psi_r: A\otimes B\rightarrow (A\otimes B)^{\otimes r}\]
 satisfying (\ref{Acoalgebra})  is  given by
 \[{\Psi_{r}}=\chi \circ (\psi^A\otimes\psi^B)\circ \Delta_{K}(e^{r-2}).\]

Let now $C$ be a dg algebra $(C,d,\mu)$ and an $A_\infty$-coalgebra $(C,d,\{\psi_r\}_{r\geq2})$ simultaneously, and   $(C\otimes C,d_\otimes,\{\Psi_r\}_{r\geq 2})$  be the tensor $A_\infty$-coalgebra. Then $(C,d,\mu,\{\psi_r\}_{r\geq2})$ is  \textbf{an $A_\infty$-bialgebra} if
the following diagram
\begin{equation*}
\begin{array}{cccccc}
C\otimes C  & \xrightarrow{\Psi_r} & (C\otimes C)^{\otimes r} \\
 \hspace{-0.1in} \mu\,{\downarrow} & &\hspace{0.1in}{\downarrow}\,\, \mu^{\otimes r} \\
 C  &  \xrightarrow{\psi_r} &   C^{\otimes r}
  \end{array}
\end{equation*}
commutes, i.e.,
the following equation holds
\begin{equation}\label{bialg}
{\psi_{r}}\circ \mu=\mu^{\otimes r}\circ {\Psi_{r}},\ \
r\geq2.
\end{equation}
In small dimensions (\ref{bialg}) reads :
\[
\begin{array}{llll}
\psi_2\circ \mu &=& (\mu\otimes \mu) \circ \chi \circ (\psi_2\otimes \psi_2)    \vspace{1mm}\\

\psi_3\circ \mu &=&  \mu^{\otimes 3}\circ\chi\circ
\left( (\psi_2\otimes 1)\psi_2\otimes \psi_3  +     \psi_3\otimes (1\otimes \psi_2)\psi_2 \right)\vspace{1mm}\\

\psi_4\circ \mu &=&   \mu^{\otimes 4}\circ\chi\circ\\
&& ( (\psi_2\otimes 1\otimes 1)(\psi_2\otimes 1)\psi_2\otimes \psi_4
+
\psi_4\otimes  (1\otimes 1\otimes \psi_2)(1\otimes \psi_2)\psi_2  \vspace{1mm}\\
&& +\,
(\psi_3\otimes 1)\psi_2\otimes ((1\otimes \psi_2\otimes 1)\psi_3 + (1\otimes \psi_3)\psi_2)  \vspace{1mm}\\

&&+\,(1\otimes \psi_2\otimes 1)\psi_3\otimes (1\otimes \psi_3)\psi_2 \vspace{1mm} \\
&&-\,
(\psi_2\otimes 1\otimes 1)\psi_3\otimes (1\otimes 1\otimes \psi_2)\psi_3).
\end{array}
\]
Equation (\ref{bialg}) rewrite  as follows. Denote   $ xy:=\mu(x,y)$  and
\[(x_1\otimes \cdots \otimes x_r)\cdot (y_1\otimes \cdots \otimes y_r):=
 x_1y_1\otimes \cdots \otimes x_r y_r   ,\]
  and for cells $a,b$  of $K_r$
 and  for $x,y\in C$
\begin{equation}\label{tformula}
  \psi_r(xy)=\sum_{\substack{|a|+|b|=r-2\\ a\leq b}}
  sgn(a,b)\,
  \psi(a)(x)\cdot \psi(b)(y).
\end{equation}

In particular,   $(C,d,\mu,\psi_2)$  is a bialgebra (Hopf algebra). Furthermore,
given an $A_\infty$-coalgebra
$(C, d,\{\Delta_r :C\rightarrow C^{\otimes r}\}_{r\geq2}),$ consider the tensor algebra $(T^a(\tilde C),d,\mu).$ Use the freeness
of $T^a(\tilde C),$ and by induction on the tensor wordlength apply to
formula (\ref{tformula})  to extend each cooperation $\Delta_r$ to the cooperation
$\psi_r\!:T^a(\tilde C)\rightarrow   T^a(\tilde C)^{\otimes r},$  and, hence, to obtain the  $A_{\infty}$-bialgebra $(T^a(\tilde C), d, \mu, \{\psi_r\}_{r\geq2}).$

\begin{remark}
The coproduct $\Delta_K$ in (\ref{delta})  is not coassociative, so that we fix the left most association by iterative application of (\ref{tformula}).
\end{remark}

The homology $H_\ast(\Omega X)$ admits an $A_\infty$-bialgebra structure \cite{SU}. However,  for a suspension $X=\Sigma Y$ this structure is specified
by  the fact that
the Bott-Samelson isomorphism $T^a \tilde{H}_\ast(Y)\approx H_{*}(\Omega \Sigma Y)$  induced by the inclusion $Y\hookrightarrow \Omega \Sigma Y$   extends to that of the $A_\infty$-bialgebras. In particular,
 the $A_\infty$-algebra substructure on $H_{*}(\Omega \Sigma Y)$ reduces to the  loop (Pontryiagin) multiplication  because
  $H_{*}(\Omega \Sigma Y)$ is a free algebra.

However, $\psi_r$ given by (\ref{tformula}) does not preserve  the primitives $PT^a(\tilde C)\subset
T^a(\tilde C) ,$ so that we have to modify (\ref{tformula})
 as follows. Given a dg coalgebra $(C,d,\Delta),$
think  of  the primitive subcoalgebra $(PC,d, \Delta)$ as a degenerated $A_{\infty}$-coalgebra, and consider two tensor $A_\infty$-coalgebras
 \[\left(A\otimes B, \{^P\!\Psi_r\}_{r\geq 2}\right)= (PA,d_A,\Delta)\otimes (B,\{\psi^B_r\}_{r\geq2})\]
   and
 \[\left(A\otimes B, \{\Psi_r^P\}_{r\geq 2}\right)=
 (A,\{\psi^A_r\}_{r\geq2})\otimes (PB,d_B,\Delta).
 \]
In fact $^P\!\Psi_r$ and  $\Psi^{P}_r,$ referred to as \emph{primitive tensor cooperations}, are of the form
\[
^P\!\Psi_r = \Delta ^{(r-1)}\otimes \psi_r \ \ \text{ and}\ \ \Psi^{P}_r = \psi_r \otimes \Delta ^{(r-1)}
\]
respectively,
where $ \Delta ^{(r-1)}:PC\rightarrow PC^{\otimes r}$ denotes $(r-1)$-iteration of $\Delta=\Delta ^1$ for $r\geq 2.$

Given an  $A_\infty$-coalgebra $(C,d, \{\Delta_r\}_{r\geq 2}),$ for each $r\geq2$ define the cooperation
\[
\varrho_r: T^a(\tilde C)\rightarrow T^a(\tilde C)^{\otimes r}
 \]
 with  $\varrho_r|_C=\Delta_r$ as follows.
 Set $C=A=B$ above, and
form the sum
\begin{equation}\label{sum}
  \varrho_r|_{C^{\otimes 2}}:=\,^P\!\Psi_r + \Psi^{P}_r \ \ \text{on} \ \  C\otimes C.
  \end{equation}
    Then set
$A=C\otimes C$  with $\psi^A_r= P\Psi_r|_{C^{\otimes 2}}$   and form the sum
 \[
 \varrho_r|_{C^{\otimes 3}}:=\,^P\!\Psi_r + \Psi^{P}_r \ \
  \text{on} \ \  C^{\otimes 3},\]
   and so on.
 Obviously, the cooperations $ \varrho_r$  are related with the product on $T^a(\tilde C)$ by the following formula
 \begin{equation}\label{tformula2}
  \varrho_r(xy)= \sum_{1\leq i\leq r}
   y_1\otimes \cdots \otimes x y_i\otimes \cdots \otimes y_r+
    x_1\otimes \cdots \otimes x_i y\otimes \cdots \otimes x_r,
\end{equation}
  where $\varrho_r(x):=x_1\otimes\cdots \otimes x_r$ (the Sweedler type notation).
The following proposition is immediate
\begin{proposition}\label{primitives}
The cooperations  $\varrho_r$  given by (\ref{tformula2})  preserve the vector subspace $ PT(\tilde C)\subset T^a(\tilde C)$
  \[  \varrho_r: PT(\tilde C)\rightarrow   PT(\tilde C)^{\otimes r}.
  \]
  \end{proposition}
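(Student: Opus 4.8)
The plan is to induct on the Lie word length, using that $PT(\tilde C)$ is a Lie algebra under the graded commutator $[x,y]=xy-(-1)^{|x||y|}yx$ and that it is generated by its length-one part, the generators $\tilde C\subset PT(\tilde C)$ being primitive in the situation at hand; by linearity it then suffices to treat iterated brackets of generators. The one classical ingredient I rely on is that the commutator of two primitive elements is again primitive, so the Lie subalgebra generated by the primitives stays inside $PT(\tilde C)$. For the base case I note that $\varrho_r|_C=\Delta_r$, hence on a generator $c\in\tilde C$ we have $\varrho_r(c)=\Delta_r(c)\in\tilde C^{\otimes r}\subseteq PT(\tilde C)^{\otimes r}$, the $A_\infty$-cooperations $\Delta_r$ being reduced; for a general $A_\infty$-coalgebra this step becomes the assertion that $\Delta_r$ carries the primitive subcoalgebra $PC$ into $(PC)^{\otimes r}$, which is the only point requiring genuine verification.

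For the inductive step I take $z=[x,y]$ with $x,y\in PT(\tilde C)$ of strictly smaller Lie word length, so that by the inductive hypothesis $\varrho_r(x)=\sum x_1\otimes\cdots\otimes x_r$ and $\varrho_r(y)=\sum y_1\otimes\cdots\otimes y_r$ already lie in $PT(\tilde C)^{\otimes r}$, i.e. all Sweedler components $x_i,y_i$ are primitive. Evaluating formula (\ref{tformula2}) on the two products $xy$ and $yx$ and forming the antisymmetrized combination $\varrho_r(xy)-(-1)^{|x||y|}\varrho_r(yx)$, the terms in which $x$ multiplies a slot of $\varrho_r(y)$ combine with the corresponding terms of $\varrho_r(yx)$ to produce the commutator $[x,y_i]$ in that slot, and symmetrically the terms involving $y$ and the components of $\varrho_r(x)$ produce $[x_i,y]$, so that one obtains
\[
\varrho_r([x,y])=\sum_{1\leq i\leq r} y_1\otimes\cdots\otimes[x,y_i]\otimes\cdots\otimes y_r+\sum_{1\leq i\leq r} x_1\otimes\cdots\otimes[x_i,y]\otimes\cdots\otimes x_r .
\]
In each resulting tensor every factor is either one of the primitive components $x_j,y_j$ or a commutator $[x,y_i]$ or $[x_i,y]$ of two primitives, hence primitive; therefore $\varrho_r([x,y])\in PT(\tilde C)^{\otimes r}$, which closes the induction.

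The step I expect to be the real obstacle is the sign bookkeeping hidden in the passage from (\ref{tformula2}) to the bracketed display above. Since $\varrho_r$ carries internal degree $r-2$, the degrees of the components $x_i,y_i$ differ from those of $x$ and $y$, so the Koszul signs attached to each slot of $\varrho_r(xy)$ and of $\varrho_r(yx)$ must be matched slot by slot for the antisymmetrization to reassemble into honest graded commutators $[x,y_i]$ and $[x_i,y]$ rather than sign-twisted combinations. Once these signs are organized, the statement reduces to the closure of $PT(\tilde C)$ under the bracket, which is exactly what lets the authors record it as immediate.
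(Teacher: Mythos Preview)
Your argument is precisely the unpacking the paper intends: the proposition is declared immediate from (\ref{tformula2}), and the bracket identity you derive in your inductive step is exactly what the paper records next as Proposition~\ref{Aprimitives} (formula (\ref{brformula})). Your flag on the base case --- that one genuinely needs $\Delta_r(PC)\subset (PC)^{\otimes r}$ --- is well taken, since the paper only imposes this hypothesis explicitly later (in Theorem~\ref{PL}); likewise the Koszul signs you worry about are suppressed in the paper's Sweedler-style formulas, so your caution there matches the level of rigor of the source.
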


\begin{definition} An $A_\infty$-coalgebra $(C, d,\{\Delta_r\}_{r\geq 2}),$ is \textbf{primitive} if the cooperations
\[             \varrho_r:  T^a(\tilde C)\rightarrow    T^a(\tilde C)^{\otimes r} ,\ \ r\geq 2,      \]
satisfy (\ref{Acoalgebra})  and, hence,  form $A_{\infty}$-coalgebra structure on $(T^a(\tilde C),d).$
\end{definition}

Denoting  $[x,y]=xy-(-1)^{|x||y|}yx,$  and taking into account (\ref{tformula2})  we immediately obtain
\begin{proposition}\label{Aprimitives}
 A primitive $A_\infty$-coalgebra $(C, d,\{\Delta_r\}_{r\geq 2})$ induces
 the $A_{\infty}$-coalgebra structure on $(PT^a(\tilde C),d)$
 satisfying the equality
 \begin{equation}\label{brformula}
  \varrho_r[x,y]=\sum_{1\leq i\leq r}
   y_1\otimes \cdots \otimes [x, y_i]\otimes \cdots \otimes y_r+
    x_1\otimes \cdots \otimes [x_i, y]\otimes \cdots \otimes x_r.
\end{equation}
  \end{proposition}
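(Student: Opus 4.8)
The plan is to treat the two assertions of the proposition in turn: first that the restricted cooperations form an $A_\infty$-coalgebra on $PT^a(\tilde C)$, and then the explicit identity (\ref{brformula}).

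For the first assertion I would argue that it is immediate from what precedes. By the definition of a primitive $A_\infty$-coalgebra the cooperations $\varrho_r$ satisfy the relations (\ref{Acoalgebra}) on all of $T^a(\tilde C)$, so $(T^a(\tilde C),d,\{\varrho_r\}_{r\geq 2})$ is an $A_\infty$-coalgebra. By Proposition~\ref{primitives} each $\varrho_r$ maps $PT^a(\tilde C)$ into $PT^a(\tilde C)^{\otimes r}$, and the differential $d$ preserves $PT^a(\tilde C)$ because, being a chain map for $\Delta=\varrho_2$, it sends primitives to primitives. Since the relations (\ref{Acoalgebra}) are identities between composites of the $\varrho_r$ and $d$, they remain valid after restriction to the invariant subspace $PT^a(\tilde C)$; hence $(PT^a(\tilde C),d,\{\varrho_r|_{PT^a(\tilde C)}\})$ is an $A_\infty$-coalgebra, with no computation required.

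For the identity (\ref{brformula}) I would expand $\varrho_r$ by linearity, writing $\varrho_r[x,y]=\varrho_r(xy)-(-1)^{|x||y|}\varrho_r(yx)$, and apply (\ref{tformula2}) to each of the two products. In $\varrho_r(xy)$ the summands fall into the family carried by the coproduct $\varrho_r(y)=y_1\otimes\cdots\otimes y_r$, with $x$ inserted on the left of a single factor, and the family carried by $\varrho_r(x)$, with $y$ inserted on the right; applying (\ref{tformula2}) to $yx$ yields the same two families with $x$ and $y$ interchanged. Collecting the $\varrho_r(y)$-family from both expansions produces, in slot $i$, the expression $xy_i-(-1)^{|x||y|}y_ix$, and the $\varrho_r(x)$-family produces $x_iy-(-1)^{|x||y|}yx_i$; it then remains to recognize these as the graded commutators $[x,y_i]$ and $[x_i,y]$.

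The one genuine obstacle, and the only place where care is needed, is the sign. The commutators occupying the slots carry the single-factor signs $(-1)^{|x||y_i|}$ and $(-1)^{|x_i||y|}$, whereas the global subtraction supplies the total-degree sign $(-1)^{|x||y|}$. The reconciliation is the Koszul rule: in $\varrho_r(xy)$ the element $x$ is transported to slot $i$ across $y_1,\dots,y_{i-1}$, while in $\varrho_r(yx)$ it is transported across $y_{i+1},\dots,y_r$, and the ratio of these transport signs equals $(-1)^{|x|(|y|-|y_i|)}$ by the parity $\sum_j|y_j|\equiv|y|$ that governs the component-wise product of (\ref{tformula}). Multiplying this ratio by $(-1)^{|x||y|}$ turns it into $(-1)^{|x||y_i|}$, exactly the sign inside $[x,y_i]$; the symmetric computation handles the $\varrho_r(x)$-family and the sign inside $[x_i,y]$. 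Once this sign bookkeeping is carried out termwise, (\ref{brformula}) follows, which is the precise sense in which it is ``immediate'' from (\ref{tformula2}).
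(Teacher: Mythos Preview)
Your approach is exactly the paper's: the authors merely say the proposition follows immediately from (\ref{tformula2}) together with the definition $[x,y]=xy-(-1)^{|x||y|}yx$, and you have unpacked precisely that computation. One small caveat on the sign bookkeeping: since $\varrho_r$ has degree $r-2$ the correct parity is $\sum_j|y_j|\equiv|y|+r$ rather than $\sum_j|y_j|\equiv|y|$, but the extra $(-1)^{|x|r}$ is exactly the Koszul sign incurred when $\psi_r$ crosses $x$ in $^P\!\Psi_r=\Delta^{(r-1)}\otimes\psi_r$, so your conclusion is unaffected.
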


When  $(C=PC,d,\Delta_2)$ is a  primitive dgc,
compare the two induced $A_\infty$-coalgebra structures
\[(T^a(\tilde C), d,\{\psi_r\}_{r\geq 2} )\ \   \text{and}\ \
\ \   (T^a(\tilde C), d, \{\varrho_r\}_{r\geq 2} ) \ \
 \text{on} \ \ T^a(\tilde C)\]
to deduce that
\begin{equation}\label{three}
  2\psi_2=  \varrho_2    \ \ \text{and}   \   \      \psi_3=  \varrho_3,
       \end{equation}
  while on the decomposables
\[  \psi_r=\varrho_r+ \bar \psi_r\ \   \text{for} \  \  r\geq 4, \]
where $\bar \psi_r$ is the non-primitive summand component of  $\psi_r$ in (\ref{tformula}).
In particular, (\ref{three}) implies
\begin{proposition}
  An  $A_\infty$-coalgebra   of the form $(C=PC, d,\{\Delta_2, \Delta_3, 0,...\})$
  is primitive.
\end{proposition}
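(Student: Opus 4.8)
The plan is to verify the quadratic relations~(\ref{Acoalgebra}) for the primitive cooperations $\varrho_r$ on all of $T^a(\tilde C)$, by induction on the tensor wordlength. First I would record that the hypothesis $\Delta_r=0$ for $r\geq4$ forces $\varrho_r=0$ for every $r\geq4$ on the whole tensor algebra: in the inductive construction~(\ref{sum}) each primitive tensor cooperation ${}^P\!\Psi_r=\Delta^{(r-1)}\otimes\psi_r$ and $\Psi^P_r=\psi_r\otimes\Delta^{(r-1)}$ has its inner factor $\psi_r=\Delta_r=0$ at every stage, so no nonzero $\varrho_r$ with $r\geq4$ is ever produced. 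Hence only $d=\varrho_1,\varrho_2,\varrho_3$ survive, relation~(\ref{Acoalgebra}) is vacuous for $n\geq6$, and it remains to check the finitely many identities indexed by $2\leq n\leq5$. On the generating subspace $C\subset T^a(\tilde C)$ one has $\varrho_r|_C=\Delta_r$, so each of these identities restricts to the corresponding relation for the $A_\infty$-coalgebra $(C,\{\Delta_2,\Delta_3,0\})$ and holds by hypothesis; this is the base of the induction.

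For the inductive step I would evaluate the defect $R_n:=\sum\pm(\mathrm{id}^{\otimes i}\otimes\varrho_{k+1}\otimes\mathrm{id}^{\otimes n-k-1-i})\circ\varrho_{n-k}$ on a product $xy$ and expand repeatedly by~(\ref{tformula2}). Because each $\varrho_r$ is a $\mu$-derivation into the slotwise $T^a(\tilde C)$-bimodule $T^a(\tilde C)^{\otimes r}$, the result groups into three parts: terms in which both the inner and the outer cooperation act within the coproduct of $x$, terms in which both act within that of $y$, and mixed terms. The first two parts reassemble into $R_n(x)\cdot y$ and $x\cdot R_n(y)$ and vanish by the inductive hypothesis, so the identity $R_n(xy)=0$ reduces to the vanishing of the mixed cross terms, namely those in which the outer $\varrho_{k+1}$ lands on the slot into which the inner $\varrho_{n-k}$ has just inserted a factor of $x$ or $y$.

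The cancellation of these cross terms is the step I expect to be the main obstacle. I anticipate that for each fixed $n$ they regroup into pairings of a piece of an iterated coproduct of $x$ with a piece of an iterated coproduct of $y$: the purely $\Delta_2$ cross terms should cancel by the coassociativity of $\Delta_2$ on $C$, while the terms carrying a single $\Delta_3$ should cancel against one another by the $n=3$ relation $(\Delta_2\otimes1-1\otimes\Delta_2)\Delta_2=\pm\,[d,\Delta_3]$. A convenient way to bookkeep this is the comparison~(\ref{three}) with the tensor-multiplication cooperations $\psi_r$: on decomposables $\varrho_2=2\psi_2$, $\varrho_3=\psi_3$, and $\varrho_r=0$ for $r\geq4$ (whence $\bar\psi_r=\psi_r$), so the primitive cross terms are the image, under the primitive projection, of the corresponding cross terms for the $\psi_r$; the latter vanish identically because $(T^a(\tilde C),\mu,\{\psi_r\})$ is an $A_\infty$-bialgebra. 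The delicate point is the ensuing factor-of-$2$ accounting: the rescaling $\varrho_2=2\psi_2$ applies only to decomposable intermediate factors and not to those landing in $C$, so one must check that all surviving primitive cross terms entering a given $R_n$ are rescaled consistently. Once this is confirmed, $R_n\equiv0$ on $T^a(\tilde C)$ for every $n$, which is exactly the primitivity of $(C,\{\Delta_2,\Delta_3,0\})$.
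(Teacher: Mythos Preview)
The paper treats this proposition as an immediate corollary of the comparison~(\ref{three}) and offers no further argument: once one knows that $\varrho_2$ agrees with (a scalar multiple of) the coassociative, cocommutative $\psi_2$ and that $\varrho_3=\psi_3$, the paper simply declares the $A_\infty$-relations for $\{\varrho_r\}$ to follow. Your approach is therefore not a different route so much as an attempt to actually flesh out what the paper leaves implicit, via induction on wordlength and the derivation formula~(\ref{tformula2}). The reduction you make---$\varrho_r=0$ for $r\geq4$, hence only $R_n$ with $n\leq5$ to check, base case on $C$ from the hypothesis---is correct and efficient.

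The gap is in your proposed mechanism for killing the cross terms. You write that ``the latter vanish identically because $(T^a(\tilde C),\mu,\{\psi_r\})$ is an $A_\infty$-bialgebra,'' but the bialgebra condition~(\ref{bialg}) is a compatibility between $\psi_r$ and $\mu$, not a statement about vanishing of any summand in the quadratic relation~(\ref{Acoalgebra}); the cross terms in the $\psi$-relation do not vanish separately---they cancel against terms involving $\psi_4,\psi_5,\dots$, which are nonzero on decomposables even when $\Delta_{\geq4}=0$. So transporting the $\psi$-identity through a ``primitive projection'' does not yield the $\varrho$-identity. Relatedly, your own caveat about the factor-of-$2$ accounting is not a bookkeeping nuisance but the heart of the matter: the equality $\varrho_2=2\psi_2$ from~(\ref{three}) holds only on decomposables (on $C$ one has $\varrho_2=\psi_2$), so $\varrho_2$ is not a global scalar multiple of $\psi_2$ and one cannot simply rescale the $\psi$-relations. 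The cross-term cancellation therefore has to be checked directly from~(\ref{tformula2}), using that for $C=PC$ the iterated primitive coproduct $\Delta^{(r-1)}$ is what appears in both ${}^P\!\Psi_r$ and $\Psi^P_r$; this is the computation the paper suppresses, and your outline does not yet supply it.
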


\section{$L_\infty$-coalgebras }

The notion of an $L_\infty$-coalgebra is dual to that of an $L_\infty$-algebra
\cite{lada-sta}, \cite{lada-markl}.
Let \[S(n):C^{\otimes n}\rightarrow C^{\otimes n}\] be a map defined for $a_1\otimes \cdots \otimes a_n\in C^{\otimes n} $ by
\[ S(n)(a_1\otimes \cdots \otimes a_n)=\underset{\sigma\in S_n}{\Sigma}   sgn(\sigma)\varepsilon(\sigma)\,a_{\sigma(1)}\otimes \cdots \otimes a_{\sigma(n)},\]
where $sgn(\sigma)$ is the standard sign of a permutation $\sigma$ and
$\varepsilon(\sigma)$ is determined by the Koszul sign rule.
Let $S_{i,n-i}\subset S_n$ denote the subset of $(i,n-i)$-shuffles with $S_{n,0}=1\in S_n,$ and let
$S(i,n-i):C^{\otimes n}\rightarrow C^{\otimes n}$ be a map defined for $a_1\otimes \cdots \otimes a_n\in C^{\otimes n} $ by
\[ S(i,n-i)(a_1\otimes \cdots \otimes a_n)=\underset{\sigma\in S_{i,n-i}}{\Sigma}   sgn(\sigma)\varepsilon(\sigma)\,a_{\sigma(1)}\otimes \cdots \otimes a_{\sigma(n)}.\]

\textbf{An $L_\infty$-coalgebra} is a dg vector space $(L,d)$ together with
linear maps \[\{\ell^r:L\rightarrow L^{\otimes r}\}_{r\geq 1}\ \  \text{of degree}\ \ r-2\ \  \text{with}\  \  \ell^1:=d\]
 such that

(i) $\ell^r=S(r)\circ\ell^r,\, r\geq 1;$

(ii)$ \underset{1\leq i\leq n}{\sum} (-1)^{i(n-i)} S(i,n-i) \circ
(\ell^i\otimes 1^{\otimes n-i}) \circ \ell^{1+n-i}=0.$

In particular, $(L, \ell^2)$ is a (graded) Lie coalgebra,
 when $d=0,$ or
$( L,  d, \ell^2 )$ is a dg Lie coalgebra, when $\ell_3=0.$
Denote $\ell^r(x):=x_1\otimes\cdots \otimes x_r$ (the Sweedler type notation),
and
 $\ell_2(x,y):=[x,y].$
\begin{definition}
  Let $L$ be a dg Lie algebra $(L,d,\ell_2)$ and an $L_{\infty}$-coalgebra
 $(L, d, \{\ell^r\}_{r\geq 2})$ simultaneously.
  Then $(L, d,\ell_2, \{\ell^r\}_{r\geq 2})$ is
\textbf{an $L_{\infty}$-bialgebra }
if for each $r\geq 2$
\begin{equation}\label{lformula}
  \ell^r[x,y]=\sum_{1\leq i\leq r}
   y_1\otimes \cdots \otimes [x, y_i]\otimes \cdots \otimes y_r+
    x_1\otimes \cdots \otimes [x_i, y]\otimes \cdots \otimes x_r.
\end{equation}
\end{definition}
In particular,  $(L, \ell_2, \ell^2)$ is a (graded) Lie bialgebra,
 when $d=0,$ or
$( L,  d, \ell_2, \ell^2 )$ is a dg Lie bialgebra, when $\ell_3=0$
in a sense \cite{etin-schiff}.
A motivated example of an $L_\infty$-bialgebra is given by Theorem \ref{PL} below.

\subsection{Symmetrization}

Given an $A_\infty$-coalgebra   $\left( C,d, \{\psi_r:
C\rightarrow C^{\otimes r} \}_{r\geq2}\right),$ there is
the associated $L_\infty$-coalgebra   $(L,d, \{\ell^r:L\rightarrow L^{\otimes r}\}_{r\geq2})$
where $(L,d)=(C,d)$ and for each  $r\geq 2$  the structural cooperation $\ell^r: L\rightarrow L^{\otimes r}$ is
obtained by the symmetrization
\[  \ell^r=\psi_r^{sym}\ \   \text{for}   \ \   \psi_r^{sym}:= S(r) \circ \psi_r. \]

\begin{theorem}\label{PL} If the structural cooperations of an  $A_\infty$-coalgebra $(C, d,  \{\Delta_r\}_{r\geq2})$
restrict to $\Delta_r:PC\rightarrow PC^{\otimes r}$ for all $r$ and form a primitive
$A_{\infty}$-coalgebra structure on $PC,$ then
the free  Lie algebra $(L(PC), \ell_2)$ admits a canonical $L_\infty$-bialgebra structure $(L(PC), d,\ell_2, \{\ell^r\}_{r\geq 2} )$
with $\ell^2=0.$
\end{theorem}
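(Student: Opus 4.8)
The plan is to produce the $L_\infty$-structure by symmetrizing the primitive $A_\infty$-coalgebra structure that is already available on the free Lie algebra, and then to read off the bialgebra compatibility and the vanishing of $\ell^2$ from the corresponding $A_\infty$-level statements proved above. First I would identify the underlying object. Since $PC$ carries a primitive $A_\infty$-coalgebra structure by hypothesis, the cooperations $\varrho_r$ of (\ref{tformula2}) form an $A_\infty$-coalgebra structure on $T^a(\widetilde{PC})$ by the Definition of primitivity, and by Proposition \ref{primitives} they restrict to the subspace of primitives. Because the primitives of the tensor Hopf algebra $(T^a(\widetilde{PC}),\mu,\psi_2)$ are exactly the free Lie algebra $L(PC)$, Proposition \ref{Aprimitives} endows $(L(PC),d)$ with an $A_\infty$-coalgebra structure $\{\varrho_r|_{L(PC)}\}$ satisfying the bracket-derivation identity (\ref{brformula}). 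Thus the Lie algebra $(L(PC),\ell_2)$, with $\ell_2=[\,\cdot\,,\cdot\,]$ inherited from $\mu$, comes equipped with an $A_\infty$-coalgebra whose cooperations are compatible with the bracket in precisely the form that (\ref{lformula}) will demand.

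Next I would set $\ell^r:=\varrho_r^{sym}=S(r)\circ\varrho_r$ for $r\geq2$ and $\ell^1:=d$. By construction each $\ell^r$ lies in the image of the antisymmetrizer, giving axiom (i), and the $L_\infty$-coalgebra relations (ii) follow from the quadratic $A_\infty$-relations (\ref{Acoalgebra}) for $\{\varrho_r\}$ by the symmetrization functor from $A_\infty$-coalgebras to $L_\infty$-coalgebras (the dual of the constructions of \cite{lada-sta}, \cite{lada-markl}, \cite{fernandez}). Hence $(L(PC),d,\{\ell^r\}_{r\geq2})$ is an $L_\infty$-coalgebra, and it remains to verify the bialgebra identity (\ref{lformula}) and that $\ell^2=0$.

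For the vanishing of $\ell^2$, I would use (\ref{three}): on $L(PC)$ one has $\varrho_2=2\psi_2$, and $\psi_2$ is the coproduct of the tensor Hopf algebra on $PC$, which is cocommutative. Antisymmetrizing a cocommutative coproduct kills it, i.e. $S(2)\circ\psi_2=0$, so $\ell^2=S(2)\circ\varrho_2=0$; equivalently, $\varrho_2$ is the reduced coproduct, which vanishes on primitives by definition. For the compatibility (\ref{lformula}), I would apply $S(r)$ to both sides of (\ref{brformula}). The left side becomes $\ell^r[x,y]$ directly. The right side is the symmetric bracket-derivation $\mathrm{ad}^{(r)}_x$, inserting $[x,-]$ into each tensor slot, applied to $\varrho_r(y)$, plus the $x\leftrightarrow y$ term applied to $\varrho_r(x)$; the key step is the commutation $S(r)\circ\mathrm{ad}^{(r)}_x=\mathrm{ad}^{(r)}_x\circ S(r)$. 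Granting this, $S(r)$ passes through the derivation and converts the Sweedler components $\varrho_r(y)=y_1\otimes\cdots\otimes y_r$ into $\ell^r(y)=S(r)\varrho_r(y)$, yielding exactly (\ref{lformula}).

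The main obstacle I anticipate is this last commutation, i.e. checking that the antisymmetrizer $S(r)$ genuinely commutes with the bracket-derivation operator. Structurally this is clear because $\mathrm{ad}^{(r)}_x$ treats all tensor slots symmetrically, but the verification requires tracking the Koszul signs produced when $S(r)$ transposes the factor $[x,y_i]$, carrying the combined degree $|x|+|y_i|$, against its neighbours, and matching them against the signs generated by first permuting $\varrho_r(y)$ and then inserting $[x,-]$. Once these signs are shown to agree slot by slot, all three required properties, namely the $L_\infty$-coalgebra axioms, the bialgebra identity (\ref{lformula}), and $\ell^2=0$, are in hand, so $(L(PC),d,\ell_2,\{\ell^r\}_{r\geq2})$ is the desired $L_\infty$-bialgebra.
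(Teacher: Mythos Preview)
Your proposal is correct and follows essentially the same route as the paper: identify $L(PC)$ with $PT^a(\widetilde{PC})$, invoke Proposition~\ref{Aprimitives} to obtain the $A_\infty$-cooperations $\varrho_r$ on $L(PC)$ satisfying (\ref{brformula}), set $\ell^r=\varrho_r^{sym}$, and deduce (\ref{lformula}) and $\ell^2=0$ from (\ref{brformula}) and the cocommutativity of $\Delta_2$ on $PC$. The paper's proof simply asserts that ``(\ref{brformula}) implies (\ref{lformula})'' without further comment; your explicit identification of the commutation $S(r)\circ\mathrm{ad}^{(r)}_x=\mathrm{ad}^{(r)}_x\circ S(r)$ as the point requiring a Koszul-sign check is a legitimate refinement rather than a different argument, and the verification is routine since $\mathrm{ad}^{(r)}_x=\sum_i 1^{\otimes i-1}\otimes[x,-]\otimes 1^{\otimes r-i}$ is an $S_r$-invariant element of the endomorphism operad.
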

\begin{proof} First, recall that $PT^a(\tilde C)=L(PC),$ the free Lie algebra
generated by $PC.$ Apply to Proposition \ref{Aprimitives}  and  obtain
$\ell^r: L(PC)\rightarrow L(PC)^{\otimes r}$  as $\ell^r=\varrho_{r}^{sym}$ on $L(PC).$ Then  (\ref{brformula}) implies (\ref{lformula}). Since $\Delta_2: PC \rightarrow PC\otimes PC$ is cocommutative, $\ell^2=0.$

\end{proof}
Denoting $L:=L(PC),$
 equality (\ref{lformula}) is equivalent to the following commutative diagram
\begin{equation}\label{biinfinity}
\begin{array}{cccccc}
L\otimes L  & \xrightarrow{\Phi_r^{sym}} & (L\otimes L)^{\otimes r} \\
 \hspace{-0.1in} \ell_2\,{\downarrow} & &\hspace{0.1in}{\downarrow}\,\, \bar \ell_2 \\
 L  &  \xrightarrow{\ell^r} &   L^{\otimes r}
  \end{array}
\end{equation}
where $\Phi_r:=\,^P\!\Psi_r + \Psi^{P}_r$ in which  the right-hand side is defined
by (\ref{sum}) for $C=L,\,   \psi_r=\varrho_r, $
and
$\bar \ell_2:=\underset{1\leq i\leq r} {\sum}  \mu^{\otimes i-1}\otimes \ell_2 \otimes  \mu^{\otimes r-i-1}.$ In fact  $\bar \ell_2$
 consists of only one non-trivial monomial because of
   $[1, -]=[-\,,1]=0$ for $1\in T^a(\tilde C).$

Let $C= H_\ast(Y).$ Then $H_\ast(Y)$ admits an $A_{\infty}$-coalgebra structure, or more precisely, a $C_\infty$-coalgebra structure \cite{kade}.
Taking into account  the Milnor-Moore theorem   we have
\[\pi_\ast(\Omega \Sigma Y)\otimes \mathbb{Q}=   P H_\ast (\Omega \Sigma Y)=PT^a(\tilde H_\ast(Y))=L(PH_\ast(Y)),\]
and then Theorem \ref{PL}
implies
\begin{theorem}
  If the cooperations $\Delta_r: H_\ast(Y)\rightarrow H_\ast(Y)^{\otimes r}$ for $r\geq 3$ preserve the primitives $P H_\ast(Y)$ and form a primitive $A_{\infty}$-
  coalgebra structure on $P H_\ast(Y),$  then
  the rational homotopy groups $ (\pi_\ast(\Omega \Sigma Y)\otimes \mathbb{Q} ,\ell_2)$
  being the Lie algebra with the Samelson product $\ell_2$  admit a canonical  $L_\infty$-bialgebra structure $ (\pi_\ast(\Omega \Sigma Y)\otimes \mathbb{Q} ,  \ell_2, \{\ell^r\}_{r\geq 2})$   with $\ell^2=0.$
\end{theorem}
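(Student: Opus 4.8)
The plan is to read off this statement as the special case $C = H_\ast(Y)$ of Theorem \ref{PL}, the only additional work being the translation of the purely algebraic conclusion of that theorem into the topological language of homotopy groups. So I would split the argument into an algebraic-dictionary part and an apply-Theorem-\ref{PL} part.

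First I would fix the input coalgebra. By \cite{kade} the rational homology $H_\ast(Y)$ carries a $C_\infty$-coalgebra structure $(H_\ast(Y), d, \{\Delta_r\}_{r\geq 2})$, dual to the $C_\infty$-algebra structure on $H^\ast(Y)$; in particular it is an $A_\infty$-coalgebra. The hypotheses of the present theorem provide precisely the data required to invoke Theorem \ref{PL}: the higher cooperations $\Delta_r$, $r\geq 3$, restrict to $PH_\ast(Y)$, and together with the cocommutative $\Delta_2$ they constitute a primitive $A_\infty$-coalgebra structure on $PH_\ast(Y)$, exactly the standing assumption of that theorem.

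Next I would record the algebraic dictionary already assembled in the text. The Bott--Samelson isomorphism $T^a\tilde H_\ast(Y) \approx H_\ast(\Omega\Sigma Y)$ is an isomorphism of Hopf algebras, hence restricts to the primitives, and the Milnor--Moore theorem identifies those primitives with the rational homotopy Lie algebra, the bracket being the Samelson product $\ell_2$. Chaining these gives the identification
\[
\pi_\ast(\Omega\Sigma Y)\otimes\mathbb{Q} = PH_\ast(\Omega\Sigma Y) = PT^a(\tilde H_\ast(Y)) = L(PH_\ast(Y))
\]
as graded Lie algebras. Applying Theorem \ref{PL} to $C = H_\ast(Y)$ then produces a canonical $L_\infty$-bialgebra structure $(L(PH_\ast(Y)), d, \ell_2, \{\ell^r\}_{r\geq2})$ with $\ell^r = \varrho_r^{sym}$ and $\ell^2 = 0$; transporting it along the identification above equips $\pi_\ast(\Omega\Sigma Y)\otimes\mathbb{Q}$ with the asserted structure, the relation (\ref{lformula}) being inherited verbatim from (\ref{brformula}) of Proposition \ref{Aprimitives} and the vanishing $\ell^2 = 0$ from the cocommutativity of $\Delta_2$.

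The step that I expect to demand the most care — and the only point where topology, rather than the formal output of Theorem \ref{PL}, does real work — is the compatibility built into the chain of identifications above: namely that the Bott--Samelson isomorphism intertwines the Pontryagin commutator on $PH_\ast(\Omega\Sigma Y)$ with the free Lie bracket $\ell_2$ on $L(PH_\ast(Y))$, and that Milnor--Moore matches this commutator with the Samelson product on $\pi_\ast(\Omega\Sigma Y)\otimes\mathbb{Q}$. Once these two compatibilities are pinned down so that $\ell_2$ is literally the same bracket on all three sides, no further computation is needed: the $L_\infty$-bialgebra axiom (\ref{lformula}) for the transported $\ell^r$ holds because it holds for $\varrho_r^{sym}$ by Theorem \ref{PL}.
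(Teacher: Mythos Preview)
Your proposal is correct and follows essentially the same route as the paper: set $C=H_\ast(Y)$, invoke the chain of identifications $\pi_\ast(\Omega\Sigma Y)\otimes\mathbb{Q}=PH_\ast(\Omega\Sigma Y)=PT^a(\tilde H_\ast(Y))=L(PH_\ast(Y))$ via Bott--Samelson and Milnor--Moore, and then read off the result from Theorem~\ref{PL}. If anything, you are more explicit than the paper about the compatibility of brackets under these identifications; the paper simply records the displayed chain of equalities and states that Theorem~\ref{PL} implies the result.
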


\begin{corollary} For $PH_\ast(Y)=H_\ast(Y)$ and $(H_\ast(Y),\{\Delta_r\}_{r\geq 2})$ to be primitive,
   the rational homotopy groups
   $ \pi_\ast(\Omega \Sigma Y)\otimes \mathbb{Q} $
    admit a canonical  $L_\infty$-bialgebra structure.
\end{corollary}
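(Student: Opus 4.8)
The plan is to obtain the corollary as a direct specialization of the preceding theorem, observing that the single combined hypothesis $PH_\ast(Y)=H_\ast(Y)$ automatically supplies both of the conditions the theorem requires. First I would unwind the hypothesis: $PH_\ast(Y)=H_\ast(Y)$ asserts that every reduced homology class is primitive, equivalently that the reduced coproduct $\bar\Delta_2$ vanishes on $\tilde H_\ast(Y)$, so that the primitive subcoalgebra $(PH_\ast(Y),d,\Delta_2)$ coincides with the whole reduced coalgebra $(\tilde H_\ast(Y),d,\Delta_2)$. Invoking the Milnor--Moore theorem together with the Bott--Samelson isomorphism, the target is identified as $\pi_\ast(\Omega\Sigma Y)\otimes\mathbb{Q}=PT^a(\tilde H_\ast(Y))=L(PH_\ast(Y))=L(\tilde H_\ast(Y))$, the free Lie algebra on all of reduced homology, equipped with the Samelson bracket $\ell_2$.

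Next I would check that the two hypotheses of the theorem above hold. That theorem demands (a) that the cooperations $\Delta_r$ for $r\geq 3$ preserve $PH_\ast(Y)$, and (b) that the restricted cooperations form a primitive $A_\infty$-coalgebra structure on $PH_\ast(Y)$. Condition (a) becomes vacuous once $PH_\ast(Y)=H_\ast(Y)$: the reduced cooperations $\Delta_r\colon\tilde H_\ast(Y)\to\tilde H_\ast(Y)^{\otimes r}$ then land tautologically in $\bigl(PH_\ast(Y)\bigr)^{\otimes r}=\tilde H_\ast(Y)^{\otimes r}$, so there is nothing to verify (this is precisely the delicate step when instead $PH_\ast(Y)\subsetneq H_\ast(Y)$). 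Condition (b) is exactly the stated primitivity of $(H_\ast(Y),\{\Delta_r\}_{r\geq2})$ in the sense of the Definition, since with $PH_\ast=H_\ast$ the primitive $A_\infty$-coalgebra $(PH_\ast,d,\{\Delta_r\})$ is literally $(H_\ast(Y),d,\{\Delta_r\})$; thus the primitive extensions $\varrho_r\colon T^a(\tilde H_\ast(Y))\to T^a(\tilde H_\ast(Y))^{\otimes r}$ of \eqref{tformula2} satisfy \eqref{Acoalgebra} and form an $A_\infty$-coalgebra structure on the tensor algebra.

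With both hypotheses secured I would apply the theorem above, equivalently Theorem \ref{PL} with $C=H_\ast(Y)$, to conclude that $L(PH_\ast(Y))=L(\tilde H_\ast(Y))$ carries the canonical $L_\infty$-bialgebra structure $\bigl(L(\tilde H_\ast(Y)),d,\ell_2,\{\ell^r\}_{r\geq2}\bigr)$ produced by symmetrizing the primitive extensions, namely $\ell^r=\varrho_r^{sym}$ on the free Lie algebra. Here Proposition \ref{Aprimitives} yields the compatibility \eqref{brformula}, which under symmetrization gives \eqref{lformula}, while the cocommutativity of $\Delta_2$ on primitives forces $\ell^2=0$. I do not expect any genuine obstacle: the analytic content has already been absorbed into the Definition of a primitive $A_\infty$-coalgebra and into Theorem \ref{PL}, and the only point requiring care is the bookkeeping that the corollary's single hypothesis delivers both conditions of the theorem, the primitive-preservation for $r\geq3$ being automatic precisely because all of reduced homology is primitive.
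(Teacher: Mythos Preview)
Your proposal is correct and follows exactly the route the paper intends: the corollary is stated without proof because it is an immediate specialization of the preceding theorem, and you have correctly identified that $PH_\ast(Y)=H_\ast(Y)$ makes the primitive-preservation condition on the $\Delta_r$ automatic while the primitivity hypothesis supplies the second condition, so Theorem~\ref{PL} (via the theorem just before the corollary) applies directly.
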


In general, the rational homotopy groups admit
an $L_{\infty}$-algebra structure   with the higher order operations $\ell_r$ rather than $\ell_2$ (cf. \cite{fernandez}), but
in the case of   $\pi_\ast(\Omega \Sigma Y)\otimes \mathbb{Q}$  it
 reduces to the Samelson bracket $\ell_2$ because $(\pi_\ast(\Omega \Sigma Y)\otimes \mathbb{Q},\ell_2)$ is a free Lie algebra. Furthermore, note that
although  the Lie coalgebra structure of $ \pi_\ast(\Omega \Sigma Y)\otimes \mathbb{Q} $  is \emph{abelian}, the higher order cooperations $\ell^r,r\geq 3,$ on
  $ \pi_\ast(\Omega \Sigma Y)\otimes \mathbb{Q} $  may be non-trivial (cf. Example \ref{one} below).

\begin{theorem}
If there is an isomorphism $ \pi_*(\Omega \Sigma Y) \otimes \mathbb{Q}\approx \pi_*(\Omega \Sigma Y')\otimes \mathbb{Q}$ of the
 $L_{\infty}$-bialgebras,
then $Y$ and $Y'$ are  equivalent in  rational homotopy category.
\end{theorem}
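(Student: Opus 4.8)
The plan is to reduce the statement to the classification of rational homotopy types by the $C_\infty$-coalgebra on $H_\ast(Y)$, and to show that this $C_\infty$-coalgebra can be read off from the $L_\infty$-bialgebra. First I would use the Milnor--Moore and Bott--Samelson identifications recorded above to view both sides as the free Lie algebras $L(PH_\ast(Y))$ and $L(PH_\ast(Y'))$ equipped with the cooperations $\ell^r=\varrho_r^{sym}$. An isomorphism of $L_\infty$-bialgebras is in particular an isomorphism of the underlying graded Lie algebras; passing to indecomposables $L/[L,L]$ yields a graded isomorphism $PH_\ast(Y)\cong PH_\ast(Y')$ of the generating spaces. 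Restricting the $\ell^r$ to these generators gives, via $\varrho_r|_{C}=\Delta_r$ followed by symmetrization, the maps $\Delta_r^{sym}$; hence the induced isomorphism on generators intertwines the symmetrized cooperations, i.e. it is an isomorphism of the associated $L_\infty$-coalgebras on $PH_\ast(Y)$.

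The decisive step is then to upgrade this to an isomorphism of the \emph{full} $C_\infty$-coalgebras $(H_\ast(Y),\{\Delta_r\})\cong(H_\ast(Y'),\{\Delta_r'\})$. Here I would exploit the defining feature of a $C_\infty$-coalgebra, that each $\Delta_r^\ast$ vanishes on shuffle-decomposables, so that $\Delta_r$ is determined by its projection onto the Lie (Harrison/indecomposable) part of $H_\ast(Y)^{\otimes r}$; combined with the cocommutativity of $\Delta_2$ inherited from the diagonal of $Y$ and with the bialgebra compatibility (\ref{lformula}), which rigidly propagates the values of $\ell^r$ on generators across all bracket-lengths of $L(PH_\ast(Y))$, one aims to recover the non-symmetrized $\Delta_r$ from their symmetrizations. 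Once this is done, Kadeishvili's theorem identifies the $C_\infty$-coalgebra on $H_\ast(Y)$ (dual to the $C_\infty$-algebra on $H^\ast(Y)$) as a complete invariant of the rational homotopy type of $Y$, so an isomorphism of these $C_\infty$-coalgebras furnishes a rational equivalence $Y\simeq_{\mathbb{Q}}Y'$.

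The main obstacle is exactly the de-symmetrization in this decisive step. For $r\geq 3$ the space $\mathrm{Lie}(r)$ has dimension $(r-1)!>1$, whereas the totally symmetric cooperation $\ell^r=S(r)\circ\varrho_r$ records only the one-dimensional symmetric part of $\Delta_r$; a priori $\Delta_r^{sym}$ therefore loses information, and formula (\ref{lformula}) shows that the bialgebra structure adds nothing beyond the values of $\ell^r$ on generators. The argument must hence show that for the $C_\infty$-coalgebras actually arising from spaces the symmetrization functor is faithful --- equivalently, that the symmetric component together with the shuffle-vanishing constraint and the cocommutativity of $\Delta_2$ pin down all of the $\mathrm{Lie}(r)$-worth of structure. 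I expect this to be the crux of the proof, and the point at which the specific geometry of $\Omega\Sigma Y$, entering through the primitive $A_\infty$-coalgebra of Proposition \ref{Aprimitives}, is used essentially; the remaining steps are formal given the cited identifications and Kadeishvili's classification.
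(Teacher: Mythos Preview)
Your strategy coincides with the paper's: pass from the $L_\infty$-bialgebra on $\pi_\ast(\Omega\Sigma Y)\otimes\mathbb{Q}$ to the $C_\infty$-coalgebra $(H_\ast(Y),\{\Delta_r\})$ and then invoke Kadeishvili's classification. The paper's own proof is in fact far terser than yours --- it consists of the single assertion that the $L_\infty$-bialgebra isomorphism ``implies the isomorphism $(H_\ast(Y),\{\Delta_r\})\approx(H_\ast(Y'),\{\Delta'_r\})$ of $C_\infty$-coalgebras'', followed by the citation of \cite{kade}.

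The de-symmetrization step you single out as ``the crux'' is therefore not argued in the paper at all; it is taken for granted. Your observation that formula (\ref{lformula}) determines $\ell^r$ on all of $L(PC)$ from its values on generators, so that the bialgebra compatibility adds no information beyond $\Delta_r^{sym}$ on $PH_\ast(Y)$, is correct, and your worry that $S(r)$ collapses the $(r-1)!$-dimensional $\mathrm{Lie}(r)$-part to a single symmetric component is a genuine one. You have correctly located the point where the argument is incomplete; you should not expect the paper's proof to resolve it, because it does not.
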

\begin{proof}

The isomorphism of the   $L_{\infty}$-bialgebras of the  theorem  implies
  the isomorphism \[(H_*(Y),\{\Delta_r\})\approx (H_*(Y'),\{\Delta'_r\})\] of $C_{\infty}$-coalgebras.  On the other hand, the $C_{\infty}$-coalgebra structure   of   $H_*(Y)$ uniquely characterizes $Y$ in the rational homotopy category \cite{kade},  so the proof of the theorem follows.

\end{proof}

\begin{example}\label{one}
1. Let a space $Y=S^2\vee S^2\vee S^2\cup_{f} e^5$ be obtained from the wedge of three
$2$-spheres by attaching the $5$-cell $e^5$ via
a map
$f:S^4\rightarrow S^2\vee S^2\vee S^2$ being a representative of the element
$ [i_1,[i_2,i_3]]\in \pi_4(S^2\vee S^2\vee S^2),$ the iterated Whitehead product
where
 $i_j:S^2\rightarrow S^2\vee S^2\vee S^2$
denotes the standard inclusion  at $j^{th}$-component $j=1,2,3.$  Then for $H:=H_\ast(Y)$ we have $H_0=\mathbb{Q},$
$H_2=\mathbb{Q}\oplus \mathbb{Q}\oplus \mathbb{Q}$ and $H_5=\mathbb{Q}.$
Although  $PH=H,$ the $C_\infty$-coalgebra structure on $H$ is non-trivial: namely,
 there is a representative
$\Delta_3:H\rightarrow H\otimes H\otimes H$ with
 $\Delta_3(w_5)=x_2\otimes y_2\otimes z_2$ for  $(x_2,y_2,z_2)\in H_2$ and $w_5\in H_5$ (compare \cite[Example 6.6]{hal-sta}). In particular, the $C_\infty$-coalgebra
 $(H, \Delta_3 )$ is primitive.
 We have
  $H_\ast(Y)\subset \pi_\ast(\Omega \Sigma Y)\otimes \mathbb{Q}$ with identifications
 $H_2(Y)=\pi_2(\Omega \Sigma Y) \otimes \mathbb{Q} $   and $H_5(Y)=\pi_5(\Omega \Sigma Y)\otimes \mathbb{Q}.$
 Consequently, for $\ell^3=\Delta^{sym}_3$ on $H,$ the cooperation
 \[\ell^3: \pi_5(\Omega \Sigma Y)\otimes \mathbb{Q}\rightarrow
 (\pi_2(\Omega \Sigma Y)\otimes \mathbb{Q})\otimes( \pi_2(\Omega \Sigma Y)\otimes \mathbb{Q})\otimes (\pi_2(\Omega \Sigma Y)\otimes \mathbb{Q} )      \]
  defined for $w_5\in \pi_5(\Omega \Sigma Y)\otimes \mathbb{Q}$
 by
  \[\ell^3(w_5)=x_2\otimes y_2\otimes z_2 - y_2\otimes x_2\otimes z_2+
 y_2\otimes z_2\otimes x_2-   x_2\otimes z_2\otimes y_2 +z_2\otimes x_2\otimes y_2- z_2\otimes y_2\otimes x_2\]
is non-trivial.

2. Let $Y'=S^2\vee S^2\vee S^2\vee S^5.$ Since $Y'$ is a suspension, the $C_\infty$-coalgebra structure on  $H_\ast(Y')$   is degenerated.
Hence, $H_\ast(Y)$  and $H_\ast(Y')$ are isomorphic as coalgebras but not as $C_\infty$-coalgebras. Consequently,
$ \pi_\ast(\Omega \Sigma Y)\otimes \mathbb{Q}$ and  $\pi_\ast(\Omega \Sigma Y')\otimes \mathbb{Q}$
are isomorphic as Lie algebras but not as  $L_\infty$-bialgebras.
In fact,   there is   only two rational homotopy types determined by these spaces in question.

\end{example}

Finally, remark that the above method applies to introduce an  $L_\infty$-bialgebra structure on the homotopy groups $\pi_\ast(\Omega \Sigma Y)$ whenever the Hurewitz homomorphism
$\pi_\ast(\Omega \Sigma Y)\rightarrow H_\ast(\Omega \Sigma Y;\mathbb{Z}) $
is an inclusion.


\begin{thebibliography}{99}

\bibitem{etin-schiff} P. Etingof and O. Schiffmann, Lectures on quantum groups, International Press (2002).


\bibitem{fernandez} J.M.M. Fern\'andez,
The Milnor-Moore theorem for $L_\infty$-algebras in rational homotopy theory, Math. Z., 300 (2022), 2147--2165.

\bibitem{hal-sta} S. Halperin and J.  Stasheff, Obstructions to homotopy
equivalences, Adv. in Math., 32 (1979), 233--279.


\bibitem{kade}  T. Kadeishvili, Cohomology $C_\infty$-algebra and rational homotopy type. In: Algebraic topology--old and new, Banach Center Publ., Polish Acad. Sci. Inst. Math., Warsaw, 85 (2009), 225--240.

\bibitem{lada-sta} T. Lada and J. Stasheff, Introduction to sh Lie algebras for physicits, International Journal of Theoretical Psysics, 32, no. 7 (1993), 1087--1103.

\bibitem{lada-markl} T. Lada and M. Markl, Strongly homotopy Lie algebras, Commun. Algebra, 23, no. 6 (1995), 2147--2161.

\bibitem {SU} S. Saneblidze and R. Umble,
Framed Matrices and $A_{\infty}$-Bialgebras, ASETMJ,   15, no. 4 (2022), 41--140.

\bibitem {SU2} S. Saneblidze and R. Umble, Comparing Diagonals on the Associahedra,  preprint,  math. AT/2207.08543,  to appear  in  HHA.


\end{thebibliography}
\end{document}